\newcommand{\T}{\mathcal{T}}
\newcommand{\scrT}{\mathscr{T}}
\newcommand{\R}{\mathbb{R}}
\newcommand{\C}{\mathbb{C}}
\newcommand{\Z}{\mathbb{Z}}
\newcommand{\ud}{\mathrm{d}}
\newcommand{\F}{\mathscr{F}}
\newcommand{\D}{\mathscr{D}}
\newcommand{\si}{\sigma}
\newcommand{\G}{\mathscr{G}}
\newcommand{\nof}{{L^p(\sigma ; \ \! \ell^2)}}
\newcommand{\nog}{{L^{p'}(\omega)}}
\newcommand{\ch}{\text{ch}}
\newcommand{\w}{\widehat}
\theoremstyle{plain}
\newtheorem{theorem}{Theorem}
\newtheorem{prop}[theorem]{Proposition}
\newtheorem{lem}[theorem]{Lemma}
\newtheorem{con}[theorem]{Conjecture}
\newtheorem{q}[theorem]{Question}
\theoremstyle{definition}
\theoremstyle{remark}
\numberwithin{equation}{section}
\numberwithin{theorem}{section}
\author{Tuomas Hyt\"onen \and Emil Vuorinen}
\title{A Two-weight inequality between $L^p(\ell^2)$ and $L^p$}
\address{Department of Mathematics and Statistics, P.O.B. 68 (Gustaf H\"allstr\"omin katu 2b), FI-00014 University of Helsinki, Finland}
\email{tuomas.hytonen@helsinki.fi}
\email{emil.vuorinen@helsinki.fi}
\thanks{Both authors were supported by the ERC Starting Grant ``Analytic--probabilistic methods for borderline singular integrals''. They are members of the Finnish Centre of Excellence in Analysis and Dynamics Research.}
\begin{document}

\begin{abstract}
We consider boundedness of a certain positive dyadic operator 
$$
T^\si \colon L^p(\si; \ \! \ell^2) \to L^p(\omega),
$$ that arose during our attempts to develop a two-weight theory for the Hilbert transform in $L^p$. Boundedness of $T^\si$ is characterized when $p \in [2, \infty)$ in terms of certain testing conditions. This requires a new Carleson-type embedding theorem that is also proved. 
\end{abstract}

\maketitle

\section{Introduction}

This paper is an outgrowth of our attempts, so far incomplete, to develop a real-variable $L^p$-theory for two-weight inequalities of the Hilbert transform, which thus far has been achieved in the case $p=2$, by Lacey, Sawyer, Shen and Uriarte-Tuero \cite{Lacey:2wHilbert,LSSU:2wHilbert} (see also \cite{Hytonen:Hilbert}). The search for an $L^p$-analogue of certain intermediate results in the existing approach (op. cit.) to the $L^2$-theory led us to the present problem which, in our opinion, is natural and interesting in its own right. 

The problem we have in mind is that of characterising the boundedness of a certain positive bilinear form, which is in the spirit of the one appearing in the famous bilinear embedding theorem of Nazarov, Treil and Volberg \cite{NTV:bilinear} and its extension (from $L^2$ to $L^p$) by Lacey, Sawyer and Uriarte-Tuero \cite{LSU:positive}; these, in turn, are dyadic versions of an old theorem of Sawyer \cite{Sawyer:2wFractional}. The new feature that distinguishes our problem from those just mentioned is that we want to understand the boundedness not just on $L^p$ but on $L^p(\ell^2)$, the space of $L^p$ functions with values in $\ell^2$ or, if the reader prefers, a mixed-norm $L^p$ space. Recall that such spaces or norms frequently arise in the context of Littlewood--Paley theory, and this is also the prospective link of the new bilinear embedding theorem to the sought-after $L^p$-theory of the Hilbert transform.

While this link is pure speculation for the time being, our mixed-norm embedding seems independently interesting, both on the level of the result (a Sawyer-type testing, or ``local $T(1)$'', characterisation), and of the proof. The latter is a non-trivial modification of the successful parallel stopping cubes technology, adapted to the mixed-norm situation; among other things, this extension calls for a new Carleson embedding theorem, proved in Section \ref{embedding}, which might also have an independent interest. 

In order to give a more detailed discussion, we first need to set up some notation.

Fix a dimension $n$ of $\R^n$. Let $\sigma$ and $\omega$ be two locally finite non-negative Borel measures in $\R^n$. For every real number $a \in \R$ let $\delta_a$ denote the Dirac point mass at the point $a$.
Using the point masses, we define a measure on $(0,\infty)$ by $\eta:= \sum_{k \in \Z} \delta_{2^k}$. We equip $\R^{n+1}_+:= \R^n \times (0,\infty)$ with the product measure $\sigma \times \eta$.

A $\sigma \times \eta$-measurable function $f \colon \R^{n+1}_+ \to \C$  can be identified with the sequence $\{f_k\}_{k \in \Z}$ of Borel functions defined by $f_k(x) := f(x,2^{-k})$. Conversely,  a sequence $\{f_k\}_{k \in \Z}$ of Borel functions on $\R^n$ can be identified with the $\sigma \times \eta$--measurable function
$$
f(x,t):= \sum_{k \in \Z} 1_{\{2^{-k}\}}(t) f_k(x).
$$
For a set $A \subset \R^{n+1}_+, A \subset \R^n$ or $A \subset \R$, we write $1_A$ for its characteristic function.

Let $p \in [1, \infty)$. For a  $\sigma \times \eta$-measurable function $f$, we write 
\begin{equation}\label{def. of norm}
\| f \|_{L^p(\sigma ; \ \! \ell^2)}:= \bigg( \int_{\R^n} \Big( \sum_{k \in \Z} |f_k(x)|^2 \Big)^{\frac{p}{2}} \ud \sigma (x) \bigg)^\frac{1}{p}, 
\end{equation}
and the space $L^p(\sigma ; \  \! \ell^2)$ is defined to be the set of those $f$ such that \eqref{def. of norm} is finite. If 
$f$ is a 
$\sigma \times \eta$-measurable function, we write 
$$
| f |_{\ell^2} (x) := \Big( \sum_{k \in \Z} |f_k(x)|^2 \Big)^\frac{1}{2}, \quad x \in \R^n.
$$ 

For any Borel function $g$ on $\R^n$, we define
\begin{equation}\label{usual Lp}
\| g \|_{L^p(\omega)}:= \Big( \int_{\R^n} |g(x)|^p \ud \omega \Big)^{\frac{1}{p}}.
\end{equation}
The space $L^p(\omega)$ is the set of those $g$ such that \eqref{usual Lp} is finite. Using the measure $\si$ we define similarly $\| g \|_{L^p(\sigma)}$ and the space $L^p(\sigma)$.

Let $\D$ be the dyadic lattice 
$$
\D:= \big\{2^{-k}\big([0,1)^n+m\big) \colon k \in \Z, m \in \Z^n\big\}.
$$
For every $Q \in \D$ denote by $\widehat{Q}$ the Carleson box $Q \times (0, \ell(Q)]$, where $\ell(Q)$ is the side length of the cube $Q$.  Let $\mu$ be a fixed non-negative $\sigma \times \eta$-measurable function, and suppose that for each dyadic cube $Q \in \D$ there is associated a non-negative real number $\lambda_Q$. 

If $f \colon \R^{n+1}_+ \to [0, \infty)$ is  $\sigma \times \eta$-measurable and $g \colon \R^n \to [0, \infty)$ is a Borel function, we define
\begin{equation}\label{def. of form}
\Lambda(f, g ):= \sum_{Q \in \D} \lambda_{Q} \iint_{\widehat{Q}} f \mu \ \ud \eta \ud \si \int _{Q} g \ud \omega,
\end{equation}
and also for every $Q_0 \in \D$ the localized version
\begin{equation*}
\Lambda_{Q_0}(f, g ):= \sum_{\begin{substack}{Q \in \D \\ Q \subset Q_0}\end{substack}} \lambda_{Q} \iint_{\widehat{Q}} f \mu \ \ud \eta \ud \si \int _{Q} g \ud \omega.
\end{equation*}
The problem we are considering is when there exists a constant $C$ such that the inequality
\begin{equation}\label{question}
\Lambda (f,g) \leq C \| f \|_{L^p(\sigma ; \ \! \ell^2)} \| g \|_{L^{p'}(\omega)}
\end{equation}
 holds for all non-negative $f$ and $g$, where $p \in (1, \infty)$ and $p'$ is the H\"older conjugate of $p$. 
We emphasize that we consider the function $\mu$ and the coefficients $\{\lambda_Q\}_{Q \in \D}$ related to the definition of $\Lambda$ as fixed here.
 If such a constant $C$ exists then we may define $\Lambda(f,g)$ for every $f \in L^p(\sigma ; \ \! \ell^2)$ and $g \in L^{p'}(\omega)$ by \eqref{def. of form}, and \eqref{question} continues to hold for  these functions. Note that we could rephrase this problem equivalently by asking whether the operator
 $$
 T^\si f:= \sum_{Q \in \D} \lambda_{Q} \iint_{\widehat{Q}} f \mu \ \ud \eta \ud \si 1_Q
 $$
 is bounded from $L^p(\sigma ; \ \! \ell^2)$ into $L^p(\omega)$.
 
 If \eqref{question} holds, then $\| 1_{\w Q} \mu \|_{L^{p'}(\sigma ; \ \! \ell^2)}< \infty$ for every $Q \in \D$ such that $\lambda_Q, \sigma(Q)$ and $\omega(Q)$ are non-zero. Therefore, without changing the problem, we may assume that 
 \begin{equation}\label{assumption about mu}
 \|1_{\w Q} \mu \|_{L^{p'}(\sigma ; \ \! \ell^2)}< \infty, \quad \text{for every } Q \in \D. 
 \end{equation}
 
We answer this question when $p\geq2$ in terms of a testing characterization, i.e., we show that to have the inequality \eqref{question} it is enough to test it with a certain class of test functions. To get a precise meaning for this we next state our main theorem:

 \begin{theorem}\label{thm:main}
Let $p\in[2,\infty)$. For every $Q \in \D$ define the function
\begin{equation}\label{def. of testing functions}
\varphi_{Q}:= |1_{\widehat{Q}} \mu |_{\ell^{2}}^{p'-2}1_{\widehat{Q}} \mu,
\end{equation}
that satisfies  $\| \varphi_Q\|_\nof < \infty$ by \eqref{assumption about mu}.

Let $\T$ and $\T^*$ denote the smallest possible constants, with the understanding that they may be $\infty$, such that
\begin{equation}\label{testing1}
\Lambda_Q( \varphi_Q,g) \leq \T \| \varphi_Q\|_\nof \| g \| _\nog
\end{equation}
and 
\begin{equation}\label{testing2}
\Lambda_Q (f, 1_Q) \leq \T^* \| f \|_\nof \| 1_Q \|_\nog 
\end{equation}
hold for every $Q \in \D$ and every non-negative $\sigma \times \eta$-measurable function $f$ and non-negative Borel function $g$. Then there exist a constant $C<\infty$ such that \eqref{question} holds if and only if $\mathcal{T}+ \mathcal{T}^* < \infty$. Moreover, if  $\mathcal{T}+ \mathcal{T}^* < \infty$,  the smallest possible constant $\| \Lambda \|$ in \eqref{question} satisfies
$$
\| \Lambda \| \simeq \T + \T^*.
$$
 \end{theorem}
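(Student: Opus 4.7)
\textbf{Necessity} is immediate: substituting $f = 1_{\widehat Q}\varphi_Q$ in \eqref{question} and noting that only cubes $R \subseteq Q$ contribute to $\Lambda$ when $f$ is supported in $\widehat Q$ gives $\T \le \|\Lambda\|$, and symmetrically $\T^* \le \|\Lambda\|$ from $g$ supported in $Q$.

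\textbf{Sufficiency} is the substance, and I would attack it by the parallel corona / two-sided stopping cube machinery of Nazarov--Treil--Volberg \cite{NTV:bilinear} and Lacey--Sawyer--Uriarte-Tuero \cite{LSU:positive}, adapted to the mixed-norm setting. By standard truncation it suffices to bound $\Lambda_{Q_0}(f,g)$ uniformly over large cubes $Q_0 \in \D$, with $f,g$ nonnegative and the dyadic sum restricted to finitely many cubes. I would then build two stopping families $\F, \G \subset \D$ rooted at $Q_0$: $\G$ from the classical doubling of $\omega$-averages of $g$ (giving $\sum_{G'\in\ch_\G(G)} \omega(G') \le \tfrac12 \omega(G)$ and $\langle g\rangle^\omega_Q \lesssim \langle g\rangle^\omega_G$ throughout the $G$-corona), and $\F$ from doubling of the scalar proxy $\tilde f_Q := \sigma(Q)^{-1} \iint_{\widehat Q} f\mu\, \ud\eta\, \ud\sigma$, which yields $\sigma$-sparseness of $\F$ together with the pointwise control $\tilde f_Q \lesssim \tilde f_{\pi_\F Q}$ for every $Q$ in the $F$-corona.

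For each $Q \subset Q_0$ let $\pi_\F Q$ and $\pi_\G Q$ denote the minimal $F \in \F$ and $G \in \G$ containing $Q$, and split
\begin{equation*}
\Lambda_{Q_0}(f,g) = \sum_{Q:\,\pi_\F Q \subseteq \pi_\G Q} + \sum_{Q:\,\pi_\G Q \subsetneq \pi_\F Q} =: \Sigma_{\F\le\G} + \Sigma_{\G<\F}.
\end{equation*}
In $\Sigma_{\F\le\G}$, for each pair $(F,G)\in\F\times\G$ with $F\subseteq G$ one freezes $g$ at its $G$-stopping value $\langle g\rangle^\omega_G$ and uses H\"older together with the definition of $\varphi_F$ — which is designed precisely so that $1_{\widehat F}\mu$ saturates the $L^{p'}(\sigma;\ell^2)$-norm dually paired with $\varphi_F$ — to recognise the inner sum over cubes $Q$ with $(\pi_\F,\pi_\G)Q = (F,G)$ as a constant multiple of $\tilde f_F \langle g\rangle^\omega_G \Lambda_F(\varphi_F, 1_F)/\|\varphi_F\|_\nof^{p-1}$. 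Applying \eqref{testing1} and then summing over $(F,G)$ with the $\G$-packing leaves an $\F$-indexed sum that is collapsed to $\|f\|_\nof \|g\|_\nog$ by the new Carleson embedding theorem of Section~\ref{embedding}. The symmetric diagonal $\Sigma_{\G<\F}$ is treated by freezing on the $\F$-side and invoking \eqref{testing2}, closed via the classical scalar $L^{p'}(\omega)$ Carleson embedding against the $\G$-sparseness.

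\textbf{The main obstacle} is precisely the collapse of the $\F$-indexed sum. The scalar stopping $\tilde f_Q \lesssim \tilde f_{\pi_\F Q}$ forgets the fibrewise $\ell^2$-structure of $f$, so a naive scalar Carleson embedding would produce only a norm of $\tilde f$ strictly larger than $\|f\|_\nof$; recovering the correct right-hand side requires the mixed-norm embedding of Section~\ref{embedding}, whose hypotheses are tailored to this situation. The restriction $p\ge 2$ enters at this very step, via the compatibility of Minkowski's inequality with the $\ell^2$-direction (which fails for $p<2$). Once the embedding is in place the two diagonals assemble to $\|\Lambda\|\lesssim \T+\T^*$, matching the trivial lower bound $\|\Lambda\|\ge \max(\T,\T^*)$ and yielding the claimed equivalence.
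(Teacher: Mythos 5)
The high-level strategy — two parallel stopping families, a split according to which stopping cube is innermost, testing conditions on each piece, and the mixed-norm Carleson embedding of Section \ref{embedding} to close the $\F$-indexed sum — is the same as in the paper, and you correctly identify where $p\ge2$ enters. However, two of the structural choices in your outline differ from the paper's in ways that are not cosmetic, and at least the first creates a genuine gap.

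\textbf{The $\F$-stopping condition.} You propose to build $\F$ by doubling of the scalar proxy $\tilde f_Q := \sigma(Q)^{-1}\iint_{\widehat Q} f\mu\,\ud\eta\,\ud\sigma$, which gives $\sigma$-sparseness of $\F$. This is not what the paper does and it does not feed into the Carleson embedding. The paper's stopping condition \eqref{stopping for f} compares $\iint_{\widehat Q} f\mu$ not against $\sigma(Q)$ but against $\iint_{\widehat Q}\varphi_F\mu$, with $F$ the current top cube. This choice is essential on two counts. First, Proposition \ref{embedding theorem} is stated and proved precisely for the family $\F$ built from \eqref{stopping for f}; its proof hinges on the sparseness estimate $\sum_{F'\in\ch_\F(F)}\|\varphi_{F'}\|_\nof^p \le \tfrac12\|\varphi_F\|_\nof^p$, which is sparseness with respect to the weights $\|\varphi_F\|_\nof^p = \int_F|1_{\widehat F}\mu|_{\ell^2}^{p'}\ud\sigma$, not with respect to $\sigma$. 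Your $\sigma$-sparseness does not imply this, and the derivation of \eqref{sparse} (the splitting into $H$ and its complement, and the inequality $p'-2\le0$) crucially uses the $\varphi_F$-normalised stopping. Second, the substitution step \eqref{substitute test function}, which replaces $\iint_{\widehat Q} f\mu$ by $A[f]_F\iint_{\widehat Q}\varphi_F\mu$ so that the testing condition \eqref{testing1} can be invoked with $\varphi_F$, also requires \eqref{stopping for f}. Your outline implicitly uses both $\tilde f_F$ and $\varphi_F$ — for instance in the claim that the inner sum is a multiple of $\tilde f_F\langle g\rangle^\omega_G\Lambda_F(\varphi_F,1_F)/\|\varphi_F\|_\nof^{p-1}$ — but these two normalisations are incompatible, and only the $\varphi_F$-based one makes the embedding theorem applicable.

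\textbf{Assignment of testing conditions.} You apply \eqref{testing1} to the block with $\pi_\F Q\subseteq\pi_\G Q$ (i.e.\ $F$ small) and \eqref{testing2} to the block with $\pi_\G Q\subsetneq\pi_\F Q$ (i.e.\ $G$ small). The paper does the reverse: when $G\subseteq F$ one replaces $f$ by a multiple of $\varphi_F$ and tests the remaining $g$-dependence via \eqref{testing1} against $g_F$; when $F\subsetneq G$ one freezes $g$ at $\langle g\rangle^\omega_G$ and tests the remaining $f$-dependence via \eqref{testing2} against the aggregated function $f_G$. The logic is that the testing inequality must handle the variable that still oscillates inside the relevant corona; in your assignment you would need, after both substitutions, to close a sum of the form $\sum_{F\in\F}\bigl(\langle g\rangle^\omega_{\pi_\G F}\bigr)^{p'}\omega(F)$, and there is no control on $\sum_{F:\pi_\G F=G}\omega(F)$ in terms of $\omega(G)$ since those $F$'s may be nested. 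The paper avoids this by working with the functions $f_G$ (resp.\ $g_F$), whose definition incorporates the $\F$-information through the \emph{disjoint} $\G$-children $G'\in\ch_\G^*(G)$, so the subsequent sum can be closed with Lemma \ref{disjoint}, the embedding theorem, and the standard $\omega$-Carleson property of $\G$.

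So while the architecture is right, the stopping condition and the pairing of testing conditions to the two blocks both need to be corrected for the argument to go through.
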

 
To prove Theorem \ref{thm:main} we use, as already mentioned,  the method of parallel stopping cubes. This technique was first introduced by Lacey, Sawyer, Shen and Uriarte-Tuero \cite{LSSU:2wHilbert} in an earlier arXiv version of their work, but replaced by other tools in the published paper. In \cite{Hytonen:A2survey} the parallel stopping cubes were used to study a similar problem but with usual $L^p$ norms rather than mixed ones. Our approach was to follow the outline of the proof in \cite{Hytonen:A2survey}, but in the set-up of this paper, it is not clear in the beginning what should be the  class of test functions in \eqref{testing1}. However, if one assumes that there exists a family $\{\varphi_Q\}_{Q \in \D}$ of test functions on $\R^{n+1}_+$ and starts to follow the outline of \cite{Hytonen:A2survey}, then there comes a situation that allows to guess the test functions, which leads to the definition \eqref{def. of testing functions}. Then it turns out, that these test functions are of the right form to conclude the proof. We show in the end of Section \ref{proof of main thm} how one can arrive at the definition \eqref{def. of testing functions}.

The case $p=2$ in Theorem \ref{thm:main} reduces to easier techniques. In fact, it can be seen as a special case of the result in \cite{NTV:bilinear}. The case $p \in (1,2)$ is an open problem, that we discuss more in Section \ref{discussion}, where we also state our conjecture about the two-weight inequality of the Hilbert transform in $L^p$.

For two numbers $\alpha, \beta \geq 0$ we use the notation $\alpha \lesssim \beta$ to mean that there exists an absolute constant $C$ such that $\alpha \leq C \beta$. Sometimes we write for example $\alpha \lesssim_p \beta$ to indicate that the implicit constant depends on $p$. Two sided estimates $\alpha \lesssim \beta \lesssim \alpha$ are abbreviated as $\alpha \simeq \beta$.

\section{An embedding theorem}\label{embedding}
In this section we start collecting tools to prove the main theorem \ref{thm:main}. In particular, we prove a Carleson-type embedding theorem that arises naturally during the proof in the next section.

We begin with a lemma that is the reason why we need to have $p \geq 2$ in Theorem \ref{thm:main}.

\begin{lem}\label{disjoint}
Let $p\geq 2$. Suppose $\{E_i\}_{i \in \mathcal{I}}$ is a countable collection of $\sigma \times \eta$-measurable sets such that $E_i \cap E_j = \emptyset$ if $i \not =j$. Let $f$ be a non-negative $\sigma \times \eta$-measurable function. Then
$$
\sum_{i \in \mathcal{I}} \| 1_{E_i} f \|_\nof ^p \leq \| f \|_\nof ^p.
$$
\end{lem}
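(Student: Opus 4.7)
The plan is to reduce the statement to an elementary pointwise (in $x$) inequality for non-negative scalars. First I would unpack the norm: writing $S_i(x) := \{k \in \Z : (x,2^{-k}) \in E_i\}$, we have
$$
\| 1_{E_i} f \|_\nof ^p = \int_{\R^n} \Big( \sum_{k \in S_i(x)} |f_k(x)|^2 \Big)^{p/2} \ud \sigma(x).
$$
The disjointness of the $E_i$ translates into disjointness of the index sets $\{S_i(x)\}_{i \in \mathcal{I}}$ for each fixed $x$.

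Setting $a_k := |f_k(x)|^2$ and $b_i := \sum_{k \in S_i(x)} a_k$ (both depending on $x$), by Tonelli the claim reduces to the \emph{pointwise} inequality
$$
\sum_{i \in \mathcal{I}} b_i^{p/2} \leq \Big( \sum_{k \in \Z} a_k \Big)^{p/2},
$$
which I would integrate over $x$ to finish.

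To establish the pointwise inequality, I would first invoke the elementary fact that for $q \geq 1$ and non-negative reals $(b_i)$,
$$
\sum_i b_i^q \leq \Big( \sum_i b_i \Big)^q,
$$
which follows by writing $b_i^q = b_i \cdot b_i^{q-1} \leq b_i \cdot (\sum_j b_j)^{q-1}$ and summing over $i$. Applied with $q = p/2$, this is precisely where the hypothesis $p \geq 2$ enters. Then disjointness of $S_i(x)$ gives $\sum_i b_i = \sum_{k \in \cup_i S_i(x)} a_k \leq \sum_k a_k$, and monotonicity of $t \mapsto t^{p/2}$ on $[0,\infty)$ finishes the chain.

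No serious obstacle is anticipated: the only place $p \geq 2$ is used is the elementary inequality $\sum_i b_i^{p/2} \leq (\sum_i b_i)^{p/2}$, which genuinely fails for $p < 2$ (the reverse inequality holds in that regime, explaining why this simple disjointness estimate is unavailable in the range excluded by the main theorem).
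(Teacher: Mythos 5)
Your proof is correct and follows essentially the same route as the paper's: reduce to the pointwise-in-$x$ superadditivity inequality $\sum_i c_i^{p/2} \le (\sum_i c_i)^{p/2}$ for $p/2\ge 1$, use disjointness to bound $\sum_i c_i$ by the full $\ell^2$-sum, and integrate. The paper states this more compactly (without spelling out the elementary scalar inequality), but the substance is identical.
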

 
\begin{proof}
Since $\frac{p}{2} \geq 1$, we have
\begin{equation*}
\begin{split}
\sum_{i \in \mathcal{I}} \| 1_{E_i} f \|_\nof ^p 
&= \int_{\R^n} \sum_{i \in \mathcal{I}}  \Big(\sum_{k \in \Z} 1_{E_i}(x,2^k) f(x,2^k)^2 \Big)^\frac{p}{2} \ud \sigma(x) \\
& \leq \int_{\R^n} \Big( \sum_{i \in \mathcal{I}} \sum_{k \in \Z} 1_{E_i}(x,2^k) f(x,2^k)^2 \Big)^\frac{p}{2} \ud \sigma(x)  \\
& \leq \| f \|_\nof^p.
\end{split}
\end{equation*}
\end{proof}

Next we state the well known dyadic Carleson embedding theorem that will be applied later. Let $\nu$ be a locally finite non-negative Borel measure in $\R^n$ and suppose $\{a_Q\}_{Q \in \D}$ is a collection of non-negative real numbers. We write the average over $Q \in \D$ of a Borel function $h \colon \R^n \to [0,\infty)$ as $ \langle h \rangle_Q^\nu:=\nu(Q)^{-1} \int_Q h \ud \nu$, that is understood to be zero if $\nu(Q)=0$. Let $p \in (1, \infty)$. There exists a constant $C$ such that
\begin{equation}\label{dyadic Carleson}
\sum_{Q \in \D} \big( \langle h \rangle^\nu_Q\big)^p a_Q \leq C\int_{\R^n}  h ^p \ud \nu
\end{equation}
holds for all Borel functions $h \colon \R^n \to [0, \infty)$ if and only if there exists a constant $C'$ such that
\begin{equation}\label{Carleson condition}
\sum_{\begin{substack}{Q' \in \D \\ Q' \subset Q}\end{substack}} a_{Q'} \leq C' \nu(Q)
\end{equation}
holds for all $Q \in \D$. Moreover, the smallest possible constants in \eqref{dyadic Carleson} and \eqref{Carleson condition} satisfy $C \simeq_p C'$.

\subsection*{Stopping cubes}
Here we show how to construct the collections of stopping cubes relevant to the present purposes.
Let $Q_0 \in \D$ and let $g \colon \R^n \to [0,\infty)$ be a locally $\omega$-integrable function. Set $\G_0:= \{ Q_0\}$, and suppose that the collections $\G_j, j \in \{0,1, \dots, k\}$, are defined for some $k$. If $G \in \G_k$, we define $\ch_\G(G)$ to be the collection of maximal dyadic cubes $Q \in \D$ such that $Q \subset G$ and
$
\langle g \rangle_Q^\omega > 2 \langle g \rangle_G^\omega.
$
Then we set $\G_{k+1}:= \bigcup _{G \in \G_k} \ch_\G(G)$, and the collection of stopping cubes with the top cube $Q_0$ is defined as $\G:= \bigcup_{k=0}^\infty \G_k$.

If $Q \in \D, Q \subset Q_0,$ we denote by $\pi_\G (Q)$ the smallest cube $G \in \G$ that contains $Q$. From the definition of $\G$ it is seen that 
$$
\langle g \rangle_Q ^\omega \leq 2 \langle g \rangle_{\pi_\G (Q)}^\omega.
$$ 
It follows from the construction that $\G$ is a \emph{2-Carleson family} (with respect to $\omega$), which means that for every $G \in \G$  there holds
$$
\sum_{\begin{substack}{G' \in \G \\ G' \subset G}\end{substack}} \omega(G') \leq 2 \omega(G).
$$
This combined with the  dyadic Carleson embedding theorem stated above implies that
\begin{equation}\label{Carleson for g}
\sum_{G \in \G} \big(\langle h \rangle^\omega_G\big)^p \omega(G) \lesssim_p \int h^p \ud \omega
\end{equation}
holds for every Borel function $h \colon \R^n \to [0, \infty)$ and every $p \in (1, \infty)$.

Let then $f \colon \R^{n+1}_+ \to [0,\infty)$ be a $\sigma \times \eta$-measurable function such that 
$$
\iint_{\widehat{Q_0}} f \mu \ud \eta \ud \si<\infty,
$$
where again $Q_0 \in \D$ is some fixed cube. We want to define a similar collection of cubes for the function $f$ involving the test functions $\varphi_Q$ from \eqref{def. of testing functions}. The reason why we define the collection as follows becomes more apparent when one studies what happens in   the equations \eqref{substitute test function} and \eqref{apply testing1} below in the proof of the main theorem. First set $\F_0:= \{Q_0\}$, and suppose $\F_0, \dots, \F_k$ are defined for some $k$. Let $F \in \F_k$. We define $\ch_\F(F)$ to be the set of maximal cubes $Q \in \D$ such that $Q \subset F$ and
\begin{equation}\label{stopping for f}
 \frac{\iint_{\widehat{Q}} f \mu \ud \eta \ud \si}{\iint_{\widehat{Q}} \varphi_{F} \mu \ud \eta \ud \si}
>A \frac{\iint_{\widehat{F}} f \mu \ud \eta \ud \si}{\iint_{\widehat{F}} \varphi_{F} \mu \ud \eta \ud \si},
\end{equation}
where $A>0$ is a big enough constant to be specified during the proof of the main theorem in Section \ref{proof of main thm}. Then $\F_{k+1}:= \bigcup_{F \in \F_k} \ch_\F (F)$ and the collection of stopping cubes for $f$ with the top cube $Q_0$  is $\F :=\bigcup_{k=0}^\infty \F_k$.

If $Q \in \D, Q \subset Q_0$, we denote by $\pi_\F(Q)$ the smallest cube $F \in \F$ that contains $Q$. It follows from the construction of $\F$ that
\begin{equation}\label{stopping property}
\frac{\iint_{\widehat{Q}} f \mu \ud \eta \ud \si}{\iint_{\widehat{Q}} \varphi_{F} \mu \ud \eta \ud \si}
\leq A \frac{\iint_{\widehat{F}} f \mu \ud \eta \ud \si}{\iint_{\widehat{F}} \varphi_{F} \mu \ud \eta \ud \si},\qquad F=\pi_\F(Q).
\end{equation}
Related to these define for $Q \in \D$ the average-type quantity
\begin{equation}\label{"average"}
[ f ]_Q:= \frac{\iint_{\widehat{Q}} f \mu \ud \eta \ud \si}{\iint_{\widehat{Q}} \varphi_{Q} \mu \ud \eta \ud \si}.
\end{equation}

\subsection*{}

For later use we record here a few identities related to the test functions $\varphi_Q$. Namely, a direct computation shows that 
\begin{equation}\label{eq:DirectComputation}
\iint_{\widehat{Q}} \varphi_{Q} \mu \ud \eta \ud \si = \int_Q | 1_{\w Q} \mu|_{\ell^2}^{p'} \ud \si= \| 1_{\w Q } \mu \|_{L^{p'}(\sigma ; \ \! \ell^2)}^{p'} =\| \varphi_Q \|_\nof^p.
\end{equation}

Now we are ready for the embedding theorem that is the main result of this section.

\begin{prop}\label{embedding theorem}
Let $p\in[2,\infty)$ and $Q_0 \in \D$. Let $f \colon \R^{n+1}_+  \to [0,\infty)$ be a $\sigma \times \eta$-measurable function such that
$$
\iint_{\widehat{Q_0}} f \mu \ud \eta \ud \si<\infty.
$$
Let $\F$ be the collection stopping cubes for the function $f$ with the top cube $Q_0$ as described above. Then
\begin{equation}\label{Carleson for f}
\sum_{F \in \F} [f ]_F^p \| \varphi_F \|_{L^p(\sigma ; \ \! \ell^2)}^p  \lesssim_p \| f \|_{L^p(\sigma ; \ \! \ell^2)}^p.
\end{equation}
\end{prop}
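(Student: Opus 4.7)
The plan is to control $\sum_{F \in \F}[f]_F^p\|\varphi_F\|_\nof^p = \sum_F A_F^p/B_F^{p-1}$, where $A_F := \iint_{\w F}f\mu\,\ud\eta\,\ud\si$ and $B_F := \|\varphi_F\|_\nof^p = \iint_{\w F}\varphi_F\mu\,\ud\eta\,\ud\si$, by passing from the overlapping integrals $\iint_{\w F}$ to integrals over the pairwise disjoint \emph{stopping shells}
\[
E_F := \w{F} \setminus \bigcup_{F' \in \ch_\F(F)} \w{F'}, \qquad F \in \F,
\]
and then concluding by Lemma~\ref{disjoint}.

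Two preliminary ingredients are needed. Cauchy--Schwarz in $\ell^2$ followed by H\"older in $L^p(\si)/L^{p'}(\si)$ yields, for any $\si \times \eta$-measurable $E \subset \w F$, the bound $\iint_E f\mu \leq \|1_E f\|_\nof \cdot \|1_E\mu\|_{L^{p'}(\si;\ell^2)}$; applied to $E = E_F$, the pointwise estimate $|1_{E_F}\mu|_{\ell^2} \leq |1_{\w F}\mu|_{\ell^2}$ gives $\|1_{E_F}\mu\|_{L^{p'}(\si;\ell^2)} \leq \|\varphi_F\|_\nof^{p-1} = B_F^{1/p'}$. Separately, rewriting the stopping condition \eqref{stopping for f} as $\iint_{\w{F'}}\varphi_F\mu < (1/A)(B_F/A_F)\iint_{\w{F'}}f\mu$ and summing over $F' \in \ch_\F(F)$ (using $\sum_{F'}\iint_{\w{F'}}f\mu \leq A_F$ by the disjointness of $\{\w{F'}\}$) yields
\[
\sum_{F' \in \ch_\F(F)}\iint_{\w{F'}}\varphi_F\mu \leq \frac{B_F}{A}.
\]

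The heart of the argument is the \emph{absorbing step}: for $A = A(p)$ large, $A_F \leq 2\iint_{E_F} f\mu$. I would derive this by applying H\"older to each $A_{F'} := \iint_{\w{F'}} f\mu$ with respect to the measure $\varphi_F\mu\,\ud\eta\,\ud\si$, obtaining $A_{F'}\leq\bigl(\iint_{\w{F'}}f^p\varphi_F^{1-p}\mu\bigr)^{1/p}\,b_{F'}^{1/p'}$ where $b_{F'} := \iint_{\w{F'}}\varphi_F\mu$, and combining via H\"older on the sum over $F'$ with the bound $\sum b_{F'} \leq B_F/A$. The hypothesis $p \geq 2$ is essential here: the auxiliary weight $\varphi_F^{1-p}\mu = |1_{\w F}\mu|_{\ell^2}^{p-2}\mu^{2-p}$ carries the non-negative power $p-2$ of $|1_{\w F}\mu|_{\ell^2}$, making this weight pointwise monotone in $F$ --- the key structural feature that ties the sub-integrals back to $A_F$ itself. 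Once $A_F \leq 2\iint_{E_F}f\mu$ is in hand, combining with the H\"older bound on $E_F$ gives
\[
[f]_F^p\|\varphi_F\|_\nof^p = \frac{A_F^p}{B_F^{p-1}} \leq 2^p\,\|1_{E_F}f\|_\nof^p,
\]
and summing over the disjoint shells $\{E_F\}$ by Lemma~\ref{disjoint} (which again uses $p \geq 2$) yields the desired bound $\lesssim_p \|f\|_\nof^p$.

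The main obstacle is the absorbing step. The na\"{\i}ve H\"older application on the sum produces $\sum_{F'}A_{F'}\leq X^{1/p}(B_F/A)^{1/p'}$ with $X := \iint_{\w F}f^p\varphi_F^{1-p}\mu$, but the combination $X^{1/p}B_F^{1/p'}$ is only an upper bound for $A_F$ (by the same H\"older on $\w F$) and can be strictly larger; a direct comparison does not yield $\sum A_{F'}\leq A_F/A^{1/p'}$. Overcoming this gap --- by threading H\"older together with the structural identity $B_F = \iint_{\w F}\varphi_F\mu$ and exploiting the $F$-monotonicity of the weight available for $p \geq 2$ --- is the delicate technical point of the proof.
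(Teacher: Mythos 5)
Your proposed \emph{absorbing step}, $A_F \leq 2\iint_{E_F}f\mu$ (equivalently $\sum_{F'\in\ch_\F(F)}A_{F'}\leq\frac12 A_F$), is not just hard to prove: it is false, and no choice of $A$ saves it. The stopping rule controls the \emph{normalized densities} $A_{F'}/b_{F'}$ relative to $A_F/B_F$, which yields only $\sum b_{F'}\leq B_F/A$; it places no constraint on how much of the $f\mu$-mass of $\w F$ lives inside $\bigcup\w{F'}$. Concretely, take $n=1$, $Q_0=[0,1)$, $\sigma$ Lebesgue, $\mu = a\,1_{\R\times\{1\}} + b\,1_{\R\times\{1/2\}}$ with $a\gg b$, and $f=1_{[0,1/2)\times\{1/2\}}$. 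Then for $Q=[0,1/2)$ one computes $\bigl(A_Q/\iint_{\w Q}\varphi_{Q_0}\mu\bigr)\big/\bigl(A_{Q_0}/B_{Q_0}\bigr)=2(a^2+b^2)/b^2$, which exceeds any fixed $A$ once $a/b$ is large, so $[0,1/2)$ is a stopping child; yet $f$ is supported entirely inside $\w{[0,1/2)}$, so $\iint_{E_{Q_0}}f\mu=0$ while $A_{Q_0}>0$. The quantity that \emph{does} enjoy a decay across stopping generations is $B_F=\|\varphi_F\|_\nof^p$, not $A_F$ --- in this example $B_{F'}/B_{Q_0}\lesssim(b/a)^{p'}$ is tiny.

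The paper's proof sidesteps any direct comparison of $A_F$ with $\iint_{E_F}f\mu$. It first proves the sparsity $\sum_{F'\in\ch_\F(F)}\|\varphi_{F'}\|_\nof^p\leq\frac12\|\varphi_F\|_\nof^p$ (via a splitting into a set $H$ where $|1_{\w F}\mu|_{\ell^2}$ dominates $\sum|1_{\w{F'}}\mu|_{\ell^2}$, and its complement; this is where $p'\leq 2$ enters), hence a Carleson condition for the point-mass measure $\nu=\sum_F\|\varphi_F\|_\nof^p\,\delta_{z(F)}$. Introducing $\alpha=\sum_F\bigl(\iint_{E_F}f\mu/\|\varphi_F\|_\nof^p\bigr)1_{\{z(F)\}}$, one has $\iint_{\w F}f\mu=\int_{\w F}\alpha\,\ud\nu$, so the left side of \eqref{Carleson for f} is comparable to $\sum_F\langle\alpha\rangle_{\w F}^{\nu\,p}\nu(\w F)$, and the \emph{dyadic Carleson embedding theorem} bounds this by $\int\alpha^p\ud\nu=\sum_F(\iint_{E_F}f\mu)^p/B_F^{p-1}$. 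Only \emph{then} does your H\"older--on--$E_F$ estimate and Lemma~\ref{disjoint} enter, giving $\leq\sum_F\|1_{E_F}f\|_\nof^p\leq\|f\|_\nof^p$. So the Carleson embedding (applied to a measure concentrated on the points $z(F)$) is precisely the device that redistributes mass from $\w F$ to the disjoint shells without needing the pointwise absorption you posited.
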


It is important to note that here we need the fact $p\geq 2$, because in the proof we apply Lemma \ref{disjoint} that does not hold for $p \in (1,2)$.

Inequality \eqref{Carleson for f} somewhat resembles Inequality \eqref{dyadic Carleson}, and we shall actually interpret the left hand side of \eqref{Carleson for f} in a way that allows us to apply the dyadic Carleson embedding theorem.	

\begin{proof}[Proof of Proposition \ref{embedding theorem}]
Our first goal is to show that for every $F \in \F$ there holds
\begin{equation}\label{sparse}
\sum_{\begin{substack}{F' \in \ch _\F (F)}\end{substack}}\| \varphi_{F'}\|_\nof^p 
\leq \frac{1}{2} \| \varphi_{F} \|^p_\nof
\end{equation}
if the parameter $A$ related to the construction of $\F$ is  big enough. Recall the identity $\| \varphi_Q\|_\nof^p=\int_Q |1_{\w Q} \mu |_{\ell^2}^{p'} \ud \sigma.$ Fix a cube $F \in \F$ and define
$$
H:= \big\{ x \in F \colon |1_{\w F} \mu |_{\ell^2}(x) > B \sum_{F' \in \ch_\F(F)} |1_{\w {F'}} \mu |_{\ell^2}(x) \big\},
$$
where $B>0$ is a big constant that will be fixed soon. Then there holds
\begin{equation}\label{outside H}
\begin{split}
\sum_{F' \in \ch_\F(F)}\int_{F'\cap H} |1_{\w {F'}} \mu |_{\ell^2}^{p'} \ud \sigma 
&\leq \sum_{F' \in \ch_\F(F)} B^{-p'} \int_{F'\cap H} |1_{\w F} \mu |_{\ell^2}^{p'} \ud \sigma \\
& \leq B^{-p'} \int_{F} |1_{\w F} \mu |_{\ell^2}^{p'} \ud \sigma,
\end{split}
\end{equation}
since the cubes $F' \in \ch_\F (F)$ are pairwise disjoint.

On the other hand, because $p'-2\leq 0$,  we can estimate in $F' \setminus H$, where $F' \in \ch_\F(F)$, that
\begin{equation*}
\begin{split}
\int_{F' \setminus H} |1_{\w {F'}} \mu |_{\ell^2}^{p'} \ud \sigma 
=\int_{F' \setminus H} |1_{\w {F'}} \mu |_{\ell^2}^{p'-2} |1_{\w {F'}} \mu |_{\ell^2}^2 \ud \sigma
&\leq B^{2-p'}\int_{F' \setminus H} |1_{\w F} \mu |_{\ell^2}^{p'-2} |1_{\w {F'}} \mu |_{\ell^2}^2 \ud \sigma \\
&\leq B^{2-p'} \iint_{\widehat{F'}} \varphi_{F} \mu \ud \eta \ud \si.
\end{split}
\end{equation*}
Hence, the stopping condition \eqref{stopping for f} gives
\begin{equation}\label{in H}
\begin{split}
\sum_{F' \in \ch_\F(F)}\int_{F' \setminus H} |1_{\w {F'}} \mu |_{\ell^2}^{p'} \ud \sigma 
&\leq B^{2-p'}\sum_{F' \in \ch_\F(F)} \iint_{\widehat{F'}} \varphi_{F} \mu \ud \eta \ud \si \\
& \leq B^{2-p'}A^{-1} \frac{\iint_{\widehat{F}} \varphi_{F} \mu \ud \eta \ud \si}{\iint_{\widehat{F}} f \mu \ud \eta \ud \si} \sum_{F' \in \ch_\F(F)}\iint_{\widehat{F'}} f \mu \ud \eta \ud \si \\
&\leq B^{2-p'}A^{-1} \iint_{\widehat{F}} \varphi_{F} \mu \ud \eta \ud \si \\
&= B^{2-p'}A^{-1}\int_{F} |1_{\w {F}} \mu |_{\ell^2}^{p'} \ud \sigma. 
\end{split}
\end{equation}

Combining estimates \eqref{outside H} and \eqref{in H} with the identity $\| \varphi_F\|_\nof^p=\int_F |1_{\w F} \mu |^{p'} \ud \sigma$ we have
$$
\sum_{F' \in \ch_\F(F)}\| \varphi_{F'}\|_\nof^p \leq \big( B^{-p'}+B^{2-p'}A^{-1}) \| \varphi_{F}\|_\nof^p.
$$
From here it is seen that if we choose for example $B :=4^{\frac{1}{p'}}$ and $A:= 4B^{2-p'}$, then $\eqref{sparse}$ is satisfied. By summing a geometric series, from \eqref{sparse} it follows that
\begin{equation}\label{geometric}
\sum_{\begin{substack}{F' \in \F \\ F' \subset F}\end{substack}}\| \varphi_{F'}\|_\nof^p 
\leq 2 \| \varphi_{F} \|^p_\nof
\end{equation}
holds for every $F \in \F$.

Next we view the sum $\sum_{F \in \F} [f ]_F^p \| \varphi_F \|_{L^p(\sigma ; \ \! \ell^2)}^p$ in a way that allows to apply the dyadic Carleson embedding theorem. If $F \in \F$, we write 
$$E_\F(\w F):= \w F \setminus \bigcup_{F' \in \ch_\F (F)} \w {F'}.$$
Note that 
 the sets $E_\F(\w F)$ are pairwise disjoint. Moreover, there holds 
$$
\w F= \bigcup_{\begin{substack}{F' \in \F \\ F' \subset F} \end{substack}} E_\F(\w F')
$$
for every $F \in \F$. 

Define a measure $\nu$ on $\R^{n+1}$ by
$$
  \nu:= \sum_{F \in \F} \| \varphi_F\|_\nof^p \delta_{z(F)},\qquad z(F):=\big(\operatorname{centre}(F),\frac34\ell(F)\big), 
$$
where $\operatorname{centre}(F)$ is the centre of the $n$-dimensional cube $F$, and $z(F)$ is the centre of the upper-half of the $(n+1)$-dimensional cube $\widehat F$.
Define also a function $\alpha$ on $\R^{n+1}$ by 
$$
\alpha:= \sum_{F \in \F} \frac{\iint_{E_\F(\widehat{F})} f \mu \ud \eta \ud \si}{\| \varphi_F\|_\nof^p}1_{\{z(F)\}}, 
$$
and recall that $\iint_{\widehat{F}} \varphi_{F} \mu \ud \eta \ud \si =\| \varphi_F \|_\nof^p$ by Equation \eqref{eq:DirectComputation}.

Let $F \in \F$. Then, by \eqref{geometric}, there holds that
$$
\nu(\widehat F) = \sum_{\begin{substack}{F' \in \F \\ F' \subset F}\end{substack}}\| \varphi_{F'}\|_\nof^p \simeq \| \varphi_{F}\|_\nof^p,
$$
and thus also
\begin{equation}\label{nu Carleson}
\sum_{\begin{substack}{F' \in \F \\ F' \subset F}\end{substack}} \nu (\widehat{F'}) 
\simeq \sum_{\begin{substack}{F' \in \F \\ F' \subset F}\end{substack}} \| \varphi_{F'}\|_\nof^p 
=\nu(\widehat F).
\end{equation}
This says that the collection $\widehat{\F}:=\{\widehat F:F\in\F\}$ is a Carleson family with respect to the measure $\nu$.

Notice that for every $F \in \F$ we have
$$
\iint_{\widehat{F}} f \mu \ud \eta \ud \si
= \sum_{\begin{substack}{F' \in \F \\F' \subset F}\end{substack}} \frac{\iint_{E_\F(\widehat{F'})} f \mu \ud \eta \ud \si}{ \| \varphi_{F'}\|_\nof^p} \| \varphi_{F'}\|_\nof^p
= \int_{\widehat F} \alpha \ud \nu,
$$
and hence
$$
[ f ]_F ^p \| \varphi_F \|_\nof^p=\Big(\frac{\iint_{\widehat{F}} f \mu \ud \eta \ud \si}{\| \varphi_{F}\|_\nof^p} \Big)^p \| \varphi_{F}\|_\nof^p 
\simeq \Big( \frac{\int_{\widehat F} \alpha \ud \nu}{\nu({\widehat F})} \Big)^p \nu({\widehat F}).
$$

We can now apply the dyadic Carleson embedding theorem to conclude that
\begin{equation*}
\begin{split}
\sum_{F \in \F} [f ]_F^p \| \varphi_F \|_\nof^p 
\simeq \sum_{F \in \F} \Big( \frac{\int_{\widehat F} \alpha \ud \nu}{\nu({\widehat F})} \Big)^p \nu({\widehat F}) 
&\lesssim \int_{\R^{n+1}} \alpha^p \ud \nu.
\end{split}
\end{equation*}
Writing out the definition of $\alpha$ and $\nu$ we have
\begin{equation*}
\begin{split}
\int_{\R^{n+1}} \alpha^p \ud \nu 
&=\sum_{F \in \F} \Big(  \frac{\iint_{E_\F(\widehat{F})} f \mu \ud \eta \ud \si}{ \| \varphi_{F}\|_\nof^p}\Big)^p \| \varphi_{F}\|_\nof^p \\
& \leq \sum_{F \in \F} \Big(  \frac{\| 1_{E_\F(\w F)} f \|_\nof \|1_{\w F} \mu \|_{L^{p'}(\si ; \ \! \ell^2)}}{ \| \varphi_{F}\|_\nof^p}\Big)^p \| \varphi_{F}\|_\nof^p \\
& = \sum_{F \in \F} \| 1_{E_\F(\w F)} f \|_\nof^p \leq \| f \|_\nof ^p,
\end{split}
\end{equation*}
where we used the identity $ \| \varphi_F \|_\nof^p= \| 1_{\w F } \mu \|_{L^{p'}(\sigma ; \ \! \ell^2)}^{p'}$ and applied Lemma \ref{disjoint}. This concludes the proof.

\end{proof}

\section{Proof of Theorem \ref{thm:main}}\label{proof of main thm}

In this section we prove Theorem \ref{thm:main}. After the proof we show how one can arrive at the definition \eqref{def. of testing functions} of the test functions $\varphi_Q$.

\begin{proof}[Proof of Theorem \ref{thm:main}]
If \eqref{question} holds, then it is clear that the testing conditions \eqref{testing1} and \eqref{testing2} hold, and that $ \max (\T,\T^*) \leq \| \Lambda \|$. Hence  we can focus on the converse, that is, we assume that $\T, \T^*< \infty$ and show that $\| \Lambda \| \lesssim \T + \T^*$. By monotonicity it is enough to fix an arbitrary cube $Q_0 \in \D$ and two non-negative functions $f \in L^p(\si ; \ \! \ell^2)$ and $g \in L^{p'}(\omega)$, and to show that
\begin{equation}\label{est:main}
\sum_{\begin{substack}{Q \in \D \\ Q \subset Q_0} \end{substack}}\lambda_{Q} \iint_{\widehat{Q}} f \mu \ \ud \eta \ud \si \int _{Q} g \ud \omega 
\lesssim (\T+\T^*) \| f \|_\nof \| g \|_\nog.
\end{equation}

Let $\F$ and $\G$ be the collections of stopping cubes for the functions $f$ and $g$, respectively, with the top cube $Q_0$ as described in Section \ref{embedding}. Using $\F$ and $\G$ we can reorganize the sum in the left hand side of \eqref{est:main}. If $Q \in \F, Q \subset Q_0$, then there exists a unique pair $(F,G) \in \F \times \G$, denoted by $\pi (Q)$,  such that $\pi_\F(Q)=F$ and $\pi_\G(Q)=G$.  Since in this case clearly $F \cap G \not = \emptyset$, it follows from the properties of dyadic cubes that either $F \subset G$ or $G \subset F$. Hence it is seen that the left hand side of \eqref{est:main} satisfies
\begin{equation*}
 LHS\eqref{est:main} = \sum_{F \in \F}\sum_{\begin{substack}{G \in \G \\ G \subset F}\end{substack}} \sum_{\begin{substack}{Q \in \D \\ \pi (Q) = (F,G)}\end{substack}} 
 +\sum_{G \in \G}\sum_{\begin{substack}{F \in \F \\ F \subsetneq G}\end{substack}} \sum_{\begin{substack}{Q \in \D \\ \pi (Q) = (F,G)}\end{substack}}
 =: I + II. 
\end{equation*}
The proof divides into considering the parts $I$ and $II$ separately.

\subsection*{Estimate for $II$}  
For $G \in \G$ define the collection
$$
\ch ^*_\G (G):= \big\{ G' \in \ch_\G(G) \colon \pi_\F (G') \subset G \big\}. 
$$
Also,  write $E_\G(\w G):= \w G \setminus \bigcup_{G' \in \ch_\G (G)} \w {G'}$. 

Let $Q \in \D, F \in \F$ and $G \in \G$ be such that $F \subsetneq G$ and $\pi (Q)=(F,G)$.  Note first that
$$
\w G = E_\G(\w G) \cup \bigcup_{G' \in \ch_\G(G)} \w{G'}.
$$
Because $Q \subset G$, and accordingly $\w Q \subset \w G$, this implies that 
$$
\w Q = \big(E_\G(\w G) \cap \w Q\big) \cup \bigcup_{\begin{substack}{G' \in \ch_\G(G)}\end{substack}} \big(\w{G'} \cap \w Q\big).
$$
Let $G' \in \ch_\G (G)$.  If $G' \cap Q = \emptyset$, then clearly $\w {G'} \cap \w Q = \emptyset$. Assume $G' \cap Q \not = \emptyset$. Then, since $\pi_\G (Q)=G$, it must be that $G' \subsetneq Q$. Also, since $G' \subsetneq Q \subset F \subsetneq G$, we can conclude that $G' \in \ch_\G^*(G)$. This reasoning shows that actually
\begin{equation}\label{decompose Q}
\w Q = \big(E_\G(\w G) \cap \w Q\big) \cup \bigcup_{\begin{substack}{G' \in \ch^*_\G(G) \\ G'\subsetneq Q}\end{substack}} \w{G'},
\end{equation}
where one should note that the sets $E_\G(\w {G})$ and $\widehat{G'}$, $G' \in \ch_\G(G)$, are pairwise disjoint.

If $Q\in \D, F \in \F$ and $G \in \G$ are such that $F \subsetneq G$ and $\pi(Q)=(F,G)$, we can write in view of \eqref{decompose Q} that
$$
\iint_{\w Q} f \mu \ud \eta \ud \sigma= \iint_{\w Q} f_G \mu \ud \eta \ud \sigma,
$$
where
\begin{equation*}
f_G\colon = 1_{E_\G(\w G)} f
+ \sum_{\begin{substack}{G' \in \ch_\G^* (G)}\end{substack}} \frac{\iint_{\w {G'}} f \mu \ud \sigma}{\iint_{\w {G'}} \varphi_{\pi_\F (G')} \mu \ud \sigma} 1_{\w {G'}} \varphi_{\pi_\F (G')}.
\end{equation*}
Hence
\begin{equation}\label{apply dual testing}
\begin{split}
II&
\leq 2\sum_{G \in \G} \langle g \rangle^{\omega}_{G}\sum_{\begin{substack}{F \in \F \\ F \subsetneq G}\end{substack}} 
\sum_{ \begin{substack}{Q \in \D \\ \pi (Q) = (F, G)} \end{substack} } \lambda_Q \iint_{\widehat{Q}} f_G \mu \ \ud \eta \ud \si \int _{Q} 1_G \ud \omega \\ 
& \leq 2 \sum_{G \in \G} \langle g \rangle ^\omega_G \Lambda_G (f_G , 1_G)  
\leq 2 \T^* \sum_{G \in \G} \langle g \rangle^{\omega}_{G} \| f_{G} \|_{L^{p}(\si ; \ \! \ell^{2})} \omega(G)^{\frac{1}{p'}} \\
& \leq 2 \T^* \Big( \sum_{G \in \G} \| f_{G} \|_{L^{p}(\si ; \ \! \ell^{2})}^{p} \Big)^{\frac{1}{p}}\Big( \sum_{G \in \G} \big( \langle g \rangle^{\omega}_{G} \big)^{p'}\omega(G) \Big)^{\frac{1}{p'}}.
\end{split}
\end{equation}
Equation \eqref{Carleson for g} gives that 
\begin{equation}\label{apply Carleson for g}
\Big( \sum_{G \in \G} \big( \langle g \rangle^{\omega}_{G} \big)^{p'}\omega(G) \Big)^{\frac{1}{p'}} \lesssim \| g \|_{L^{p'}(\omega)}.
\end{equation}

Fix some $G \in \G$.  We have 
\begin{equation*}
\begin{split}
\| f_{G} \|_{L^{p}(\si ; \ \! \ell^{2})}
&\leq \| 1_{E_\G(\w G)}f \|_{L^{p}(\si ; \ \! \ell^{2})}
+\Big\|\sum_{\begin{substack}{G' \in \ch_\G^* (G)}\end{substack}} \frac{\iint_{\w {G'}} f \mu \ud \sigma}{\iint_{\w {G'}} \varphi_{\pi_\F (G')} \mu \ud \sigma} 1_{\w {G'}} \varphi_{\pi_\F (G')}\Big\|_{L^p(\sigma ; \ \! \ell^2)}.
\end{split}
\end{equation*}
For every $G' \in \ch ^*_\G (G)$ there exists a cube $F \in \F$ such that $G' \subset F \subset G$ and $\pi_\F (G')=F$, whence it follows that $\pi_\G(F)=G$ or $F \in \ch_\G(G)$. Hence the sum over $G' \in \ch^*_\G(G)$ can be written as 
\begin{equation}\label{organize}
\sum_{G' \in \ch_\G^*(G)}= \sum_{\begin{substack}{F \in \F \colon \\ \pi_\G (F)=G  \text{ or} \\ F \in \ch_\G(G) }\end{substack}}\sum_{\begin{substack}{G' \in \ch _\G(G) \\ \pi_\F (G') =F}\end{substack}}.
\end{equation}
Also, the stopping condition implies for $G' \in \ch_\G^*(G)$ that
\begin{equation}\label{stopping gives}
\frac{\iint_{\w {G'}} f \mu \ud \sigma}{\iint_{\w {G'}} \varphi_{\pi_\F (G')} \mu \ud \sigma}  \leq A[ f ]_{\pi_\F(G')}.
\end{equation}
Since the cubes $G' \in \ch_\G^*(G)$ are pairwise disjoint,  \eqref{organize} and \eqref{stopping gives} give that
\begin{equation*}
\begin{split}
\Big\|\sum_{\begin{substack}{G' \in \ch_\G^* (G) }\end{substack}} &\frac{\iint_{\w {G'}} f \mu \ud \sigma}{\iint_{\w {G'}} \varphi_{\pi_\F (G')} \mu \ud \sigma} 1_{\w {G'}} \varphi_{\pi_\F (G')}\Big\|_{L^p(\sigma ; \ \! \ell^2)}^p \\
& \leq A^p \sum_{\begin{substack}{F \in \F \colon \\ \pi_\G (F)=G  \text{ or} \\ F \in \ch_\G(G) }\end{substack}} [f]_F^p \sum_{\begin{substack}{G' \in \ch _\G(G) \\ \pi_\F (G') =F}\end{substack}} \| 1_{\w {G'}} \varphi_F \|^p_{L^p(\sigma ; \ \! \ell^2)} \\
& \leq A^p \sum_{\begin{substack}{F \in \F \colon \\ \pi_\G (F)=G  \text{ or} \\ F \in \ch_\G(G) }\end{substack}} [f]_F^p \| \varphi_F \|_{L^p(\sigma ; \ \! \ell^2)}^p.
\end{split}
\end{equation*}

Using the estimate for $\| f_G \|_\nof$ we get
\begin{equation*} 
\begin{split}
\Big( \sum_{G \in \G} \| f_{G} \|_{L^{p}(\si ; \ \! \ell^{2})}^{p} \Big)^{\frac{1}{p}} 
 &\leq \Big( \sum_{G \in \G} \| 1_{E_\G(\w G)}f \|_{L^{p}(\si ; \ \! \ell^{2})}^p \Big)^\frac{1}{p} \\
&+ A\Big( \sum_{G \in \G}  \sum_{\begin{substack}{F \in \F \colon \\ \pi_\G (F)=G  \text{ or} \\ F \in \ch_\G(G) }\end{substack}} [f]_F^p \| \varphi_F \|_{L^p(\sigma ; \ \! \ell^2)}^p\Big)^\frac{1}{p}. \\
\end{split}
\end{equation*}
Lemma \ref{disjoint} implies that  $\sum_{G \in \G} \| 1_{E_\G(\w G)}f \|_{L^{p}(\si ; \ \! \ell^{2})}^p \leq \| f \|_\nof^p$.  Since for every $F \in \F$ there exist at most two cubes $G \in \G$ such that $\pi_\G (F)=G$ or $F \in \ch_\G(G)$, Proposition \ref{embedding theorem} gives
$$
\sum_{G \in \G}  \sum_{\begin{substack}{F \in \F \colon \\ \pi_\G (F)=G  \text{ or} \\ F \in \ch_\G(G) }\end{substack}} [f]_F^p \| \varphi_F \|_{L^p(\sigma ; \ \! \ell^2)}^p
\leq 2\sum_{F \in \F}[f]_F^p \| \varphi_F \|_\nof^p
\lesssim \| f \|_\nof^p.
$$

Hence we have shown that $\sum_{G \in \G} \| f_{G} \|_{L^{p}(\si ; \ \! \ell^{2})}^{p} \lesssim \| f \|_\nof^p$, and combining this with \eqref{apply dual testing} and \eqref{apply Carleson for g} yields
$$
II \lesssim \T^* \| f \|_{L^p(\sigma ; \ \! \ell^2)} \| g \|_{L^{p'}(\omega)}.
$$

\subsection*{Estimate for $I$} Similarly as with the cubes $G \in \G$ we define for $F \in \F$ the collection
$$
\ch^*_\F(F):= \big\{ F' \in \ch_\F(F) \colon \pi_\G (F') \subset F\big\}.
$$
Denote $E_\F(F):= F \setminus \bigcup_{F' \in \ch_\F(F)} F'$. Suppose $Q \in \D, F \in \F$ and $G \in \G$ are such that $G \subset F$ and $\pi(Q)=(F,G)$. Then, by a similar reasoning as above when estimating the term $II$, there holds that
$$
\int_Q g \ud \omega = \int_Q g_F \ud \omega,
$$
where
\begin{equation*}
g_{F}:= 1_{E_\F(F)}g + \sum_{F' \in \text{ch}^{*}_{\F}(F)} \langle g \rangle^{\omega}_{F'} 1_{F'}.
\end{equation*}
Also, the construction of $\F$ shows that
\begin{equation}\label{substitute test function}
\begin{split}
\iint_{\widehat{Q}} f \mu \ud \eta \ud \si
&= \frac{\iint_{\widehat{Q}} f \mu \ud \eta \ud \si}{\iint_{\widehat{Q}} \varphi_{F} \mu \ud \eta \ud \si} \iint_{\widehat{Q}} \varphi_F  \mu \ud \eta \ud \si \\
&\leq A [ f ]_{F} \iint_{\widehat{Q}} \varphi_{F} \mu \ud \eta \ud \si. 
\end{split}
\end{equation}

Using these we have
\begin{equation}\label{apply testing1}
\begin{split}
I &\leq A \sum_{F \in \F} [ f ]_F \sum_{\begin{substack}{G \in \G \\ G \subset F}\end{substack}} \sum_{\begin{substack}{Q \in \D \\ \pi (Q) = (F,G)}\end{substack}} \lambda_Q \iint_{\w Q} \varphi_F \mu \ud \si
\int_Q g_F \ud \omega \\
& \leq A \sum_{F \in \F} [f]_F \Lambda_F(\varphi_F, g_F) 
\leq A \T \sum_{F \in \F} [f]_F \| \varphi_F \|_{L^p(\sigma ; \ \! \ell^2)} \| g_F \|_{L^{p'}(\omega)} \\
& \leq A \T  \Big( \sum_{F \in \F} [ f ]_F^p \| \varphi_F\|_{L^p(\sigma ; \ \! \ell^2)} ^p \Big)^\frac{1}{p} \Big( \sum_{F \in \F} \| g_F \|_{L^{p'}(\omega)}^{p'} \Big)^{\frac{1}{p'}}.
\end{split}
\end{equation}
Proposition \ref{embedding theorem} gives again that
$$
\Big( \sum_{F \in \F} [ f ]_F^p \| \varphi_F\|_{L^p(\sigma ; \ \! \ell^2)} ^p \Big)^\frac{1}{p}
\lesssim \| f \|_{L^p(\sigma ; \ \! \ell^2)}.
$$

To conclude the proof it remains to consider $\sum_{F \in \F} \| g_F \|_{L^{p'}(\omega)}^{p'}$. If $F \in \F$, then
$$
\| g_F \|_{L^{p'}(\omega)}^{p'} = \| 1_{E_\F(F)}g\|_{L^{p'}(\omega)}^{p'}+ \sum_{F' \in \ch_\F^*(F)} \big(\langle g \rangle_{F'}^\omega\big)^{p'} \omega (F').
$$
Clearly 
$$
\sum_{F \in \F} \| 1_{E_\F(F)}g \|_{L^{p'}(\omega)}^{p'}  \leq \| g \|_\nog^{p'},
$$
since the sets $E_\F(F), F \in \F$, are pairwise disjoint.  Rewriting the sum as in \eqref{organize}, the other term satisfies
\begin{equation*}
\begin{split}
\sum_{F \in \F} \sum_{F' \in \ch_\F^*(F)} \big(\langle g \rangle_{F'}^\omega\big)^{p'} \omega (F') 
&\leq  2^{p'}\sum_{F \in \F} \sum_{\begin{substack}{G \in \G \colon \\ \pi_\F (G)=F  \text{ or} \\ G \in \ch_\F(F) }\end{substack}} \big( \langle g \rangle_{G}^\omega\big)^{p'}\sum_{\begin{substack}{F' \in \ch_\F(F) \\ \pi_\G (F')=G}\end{substack}} \omega (F') \\
&\leq 2^{p'} \sum_{F \in \F} \sum_{\begin{substack}{G \in \G \colon \\ \pi_\F (G)=F  \text{ or} \\ G \in \ch_\F(F) }\end{substack}} \big( \langle g \rangle_{G}^\omega\big)^{p'} \omega (G) \\
& \leq 2^{1+p'} \sum_{G \in \G} \big( \langle g \rangle_{G}^\omega\big)^{p'} \omega (G) \lesssim \| g \|_\nog^{p'}.
\end{split}
\end{equation*}
Thus we have shown that $\sum_{F \in \F} \| g_F \|_{L^{p'}(\omega)}^{p'} \lesssim \| g \|_\nog^{p'},$ and this concludes the proof of Theorem \ref{thm:main}.

\end{proof}

Let us now discuss how one can arrive at the definition \eqref{def. of testing functions} of the test functions $\varphi_Q$. Suppose we want to find a family $\{\theta_Q\}_{Q \in \D}$ of non-negative $\sigma \times \eta$-measurable functions such that if 
\begin{equation}\label{theta testing1}
\Lambda_Q (\theta_Q,g) \leq C_1 \| \theta_Q\|_\nof \| g \|_{L^{p'}(\omega)}
\end{equation}
and
\begin{equation}\label{theta testing 2}
\Lambda_Q(f, 1_Q) \leq C_2 \| f \|_\nof \| 1_Q \|_\nog
\end{equation}
hold uniformly for $Q \in \D$ and non-negative functions $f$ and $g$, then
$$
\Lambda(f,g) \lesssim (C_1+C_2) \| f \|_\nof \| g \|_\nog
$$
holds for all non-negative $f$ and $g$.  To find this kind of family, we first assume that $\{\theta_Q\}_{Q \in \D}$ is some collection of functions such that  \eqref{theta testing1} and \eqref{theta testing 2} hold, and then try to follow the method of parallel stopping cubes. We can proceed precisely as in Section \ref{proof of main thm} until we have to prove the estimate
\begin{equation}\label{embedding with theta}
\sum_{F \in \F} \Big(\frac{\iint_{\widehat{F}} f \mu \ud \eta \ud \si}{\iint_{\widehat{F}} \theta_{F} \mu \ud \eta \ud \si}\Big)^p \| \theta_F\|_\nof^p
\lesssim \| f \|_{L^p(\sigma ; \ \! \ell^2)}^p,
\end{equation}
where now the collection $\F$ is defined with the functions $\theta_Q$ instead of $\varphi_Q$.

To prove \eqref{embedding with theta}, we might want to minimize the ratios
$$
\frac{\| \theta_{F} \|_{L^{p}(\si ; \ \! \ell^{2})}^{p}}{\left(\iint_{\widehat{F}} \theta_{F} \mu \ \ud \eta \ud \si\right)^{p}}.
$$
However, one should note that this is not directly a minimization of the sum in the left hand side of \eqref{embedding with theta}, because the collection $\F$ depends on the choice of the functions $\theta_Q$.

H\"older's inequality implies
$$
\frac{\| \theta_{F} \|_{L^{p}(\si ; \ \! l^{2})}^{p}}{\left(\iint_{\widehat{F}} \theta_{F} \mu \ \ud \eta \ud \si\right)^{p}} \geq \frac{1}{\|1_{\w F} \mu \|_{L^{p'}(\si ; \ \! \ell^2)}^p},
$$
and equality is reached with
$$
\theta_F= |1_{\w F} \mu |_{\ell^2}^{p'-2} 1_{\w F}\mu.
$$
This is the definition given in \eqref{def. of testing functions}.

One may wonder what happens if one tries to find in this way a family $\{\vartheta_Q\}_{Q \in \D}$ of test functions in place of the indicators in \eqref{theta testing 2}. Then one would be led to minimize the ratios
$$
\frac{\| \vartheta_G \|_{L^{p'}(\omega)}^{p'}}{\left( \int_G\vartheta_G\ud \omega \right)^{p'}}
$$
for $G$ in some collection of dyadic cubes. Again by H\"older's inequality this is minimized by $\vartheta_G=1_G$.

\section{Open problems and discussion}\label{discussion}

As mentioned in Introduction, the problem of this paper arose when we tried to build  two-weight $L^p$-theory for the Hilbert transform. One part in the existing $L^2$-theory is to bound the so-called \emph{tail form}. In \cite{Hytonen:Hilbert}, Section 6, this part is reduced to an estimate of the form
$$
(f,g) \mapsto \sum_{Q \in \D} \lambda_Q \int_{Q_+} f \ud  \sigma \int_{Q_-} g \ud \omega \lesssim \| f \|_{L^2(\si)} \| g \|_{L^{2}(\omega)},
$$
where $Q_+$ and $Q_-$ are two distinguished child cubes of the cube $Q \in \D$. The estimate
\begin{equation}\label{question2}
\sum_{Q \in \D} \lambda_{Q} \iint_{\widehat{Q}} f \mu \ \ud \eta \ud \si \int _{Q} g \ud \omega \lesssim \| f \|_\nof \| g \|_\nog
\end{equation}
came up as a model problem when we considered possible $L^p$-generalizations related to the tail form.

We raise here the following question: If $p \in (1,2)$, when does estimate \eqref{question2} hold? In order to suggest one approach, we briefly describe some results related to the operator
$S^\si f := \sum_{Q \in \D} \lambda_Q \int_Q f \ud \si 1_Q.$
Let $S^\omega$ be the corresponding operator with the measure $\omega$.

The $L^2$-result from \cite{NTV:bilinear} and its generalization in \cite{LSU:positive}, mentioned in Introduction, state that if $1<p \leq q < \infty$, then a similar theorem as Theorem \ref{thm:main} characterizes boundedness of  $S^\si$ from $L^p(\si)$ into $L^q(\omega)$; $S^\si$ is bounded if and only if $S^\si$ and $S^\omega$ satisfy a testing condition with indicators $1_Q$ of dyadic cubes  $Q \in \D$. Boundedness of $S^\si \colon L^p(\si) \to L^q(\omega)$ in the range $1<q<p<\infty$ was characterized by Tanaka \cite{Tanaka:upper} in terms of a discrete Wolff's potential. See also a unified theorem for all exponents $p,q \in (1,\infty)$ by H\"anninen, Hyt\"onen and Li \cite {HHL}. In \cite{HHL} it was shown that the indicator testing conditions do not imply boundedness of $S^\si \colon L^p(\si) \to L^q(\omega)$ if $1<q<p<\infty$; this example is even in the case when both the measures $\sigma$ and $\omega$ are equal to the Lebesgue measure.

The estimate \eqref{question2} can be equivalently formulated as the boundedness of 
$$
T^\sigma f:= \sum_{Q \in \D}  \lambda_{Q} \iint_{\widehat{Q}} f \mu \ \ud \eta \ud \si1_Q
$$
from $L^p(\si; \ \! \ell^2)$ into $L^p(\omega)$. The fact that we were not able to characterize the estimate \eqref{question2} when $p \in (1,2)$ somewhat fits to the known results about  boundedness of $S^\sigma \colon L^p(\si) \to L^q (\omega)$ and the relative order of the exponents $p$ and $q$. Namely, with the operator $S^\sigma$ the indicator testing conditions imply boundedness only when $1<p \leq q<\infty$, and in the range $p \geq 2$ where we can characterize boundedness of $T^\si$,  the exponent $p$ related to $L^p(\omega)$  is greater than or equal to both the exponents $2$ and $p$ related to $L^p(\si ; \ \! \ell^2)$. 

The approach to studying boundedness of $T^\si \colon L^p(\si ; \ \! \ell^2) \to L^p(\omega), p \in (1,2)$, that we propose here, is to use a certain \emph{quadratic testing} that was introduced by the second author in \cite{Vuorinen:well_loc, Vuorinen:dyadic_shift_Lp}, and was introduced to the second author by the first author in connection with a PhD project. It was shown in \cite{Vuorinen:well_loc} that if $p,q \in (1, \infty)$, then $S^\si \colon L^p(\si) \to L^q(\omega)$ is bounded if and only if  $S^\si$ and $S^\omega$ satisfy the quadratic testing condition.


Suppose $p \in (1,\infty)$. Define again the test functions
\begin{equation}\label{def. of testing functions 2}
\varphi_{Q}:= |1_{\widehat{Q}} \mu |_{\ell^{2}}^{p'-2}1_{\widehat{Q}} \mu, \quad Q \in \D.
 \end{equation}
For $Q \in \D$ let $T^\si_Q$ to be the corresponding localized version of the operator,  defined with the sum extending only over $Q' \subset Q$.  Let $\mathscr{T}^\si_p$  be the smallest possible constant such that
\begin{equation}\label{quadratic1}
\Big \| \Big( \sum_{Q \in \D} \big (a_Q T^\si_Q \varphi_Q \big)^2 \Big)^\frac{1}{2} \Big \|_{L^p(\omega)}
\leq \mathscr{T}^\si_p \Big \| \Big( \sum_{Q \in \D}  (a_Q \varphi_Q )^2 \Big)^\frac{1}{2} \Big \|_\nof
\end{equation}
holds for all collections $\{a_Q\}_{Q \in \D}$ of real numbers, with the understanding that $\mathscr{T}^\si_p$  may be $\infty$. We say that $T^\si$ satisfies the quadratic testing condition in $L^p$  if the constant $\mathscr{T}^\si_p$ is finite. Similarly, we define the quadratic testing constant $\mathscr{T}^\omega_{p'}$ for the formal adjoint operator $T^\omega g:= \sum_{Q \in \D} \lambda_Q \int_Q g \ud \omega 1_{\w Q} \mu$ using indicators $1_Q, Q \in \D,$ as test functions.

The precise question we want to ask is the following:

\begin{q}
Let $p \in (1,2)$. If $\mathscr{T}^\si_p +\mathscr{T}^\omega_{p'}< \infty$, then does it follow that $T^\si \colon L^p(\si ; \ \! \ell^2) \to L^p(\omega)$ is bounded, and that the estimate
$$
\| T^\si f \|_{L^p(\omega)} \lesssim (\mathscr{T}^\sigma_p+ \mathscr{T}^{\omega}_{p'})\| f \|_\nof, \quad f \in L^p(\si ; \ \! \ell^2),
$$
holds?
\end{q}
 
We remark that when $p \in (1,2)$ it is possible that one should use some other class of test functions than the ones defined in \eqref{def. of testing functions 2}; when using quadratic testing the proof does not offer a similar situation as described in the end of Section \ref{proof of main thm} to guess the test functions.

It is not immediately obvious that  the quadratic testing condition is a necessary consequence of boundedness of $T^\si$; nevertheless it follows in the spirit of a classical theorem by Marcinkiewicz and Zygmund \cite{MaZy} that if $T^\si \colon L^p(\si ; \ \! \ell^2) \to L^p(\omega)$ is bounded, then $\mathscr{T}^\sigma_p+\mathscr{T}^\omega_{p'} \lesssim \| T^\sigma\|_{L^p(\sigma ; \ \! \ell^2) \to L^p(\omega)}$, see \cite{Vuorinen:well_loc} or \cite{Vuorinen:dyadic_shift_Lp}.

\subsection*{Two-weight inequality of the Hilbert transform}

Finally, we state our conjecture about the two-weight inequality of the Hilbert transform in $L^p$. In the following we assume that $\sigma$ and $\omega$ are non-negative locally finite Borel measures in $\R$. We shall somewhat imprecisely just talk about the Hilbert transform as an operator $H^\si$ or $H^\omega$, where $\sigma$ and $\omega$ refer to the measure of integration in the definition of these operators. The operators $H^\si$ and $H^\omega$ should be thought of as formal adjoints of each other, in the sense that
$$
\int_\R g H^\si (f) \ud \omega = -\int_\R f H^\omega(g)  \ud \si
=\iint_{\R\times\R} \frac{g(x)f(y)}{x-y}\ud \omega(x)\ud \si (y)
$$
for $f$ and $g$ in a suitable class of functions. We refer to \cite{Lacey:2wHilbert,LSSU:2wHilbert} and \cite{Hytonen:Hilbert} for a precise definition of the Hilbert transform in this two-weight setting.  

Let $p \in (1, \infty)$. We say that $H^\sigma$ satisfies the global quadratic testing condition in $L^p$ if there exists a constant $C$ such that the inequality 
\begin{equation}\label{quadratic for H}
\Big \| \Big( \sum_{i =1}^\infty \big (a_i H^\si 1_{I_i} \big)^2 \Big)^\frac{1}{2} \Big \|_{L^p(\omega)}
\leq C \Big \| \Big( \sum_{i=1}^\infty  (a_i 1_{I_i} )^2 \Big)^\frac{1}{2} \Big \|_{L^p(\si)}
\end{equation}
holds for all collections $\{I_i\}_{i=1}^\infty$ of intervals in $\R$ and all collections $\{a_i \}_{i=1}^\infty$ of real numbers. The smallest possible constant $\mathscr{H}^\si_p$ in this inequality is the quadratic testing constant for $H^\si$. Similarly, we define the testing constant $\mathscr{H}^\omega_{p'}$ for the operator $H^\omega$ by replacing $H^\si$ with $H^\omega$, $p$ with $p'$ and reversing the roles of $\sigma$ and $\omega$ in \eqref{quadratic for H}.

In \cite{Lacey:2wHilbert,LSSU:2wHilbert} and \cite{Hytonen:Hilbert} it was shown that the two-weight inequality 
\begin{equation}\label{two weight in L^2}
\| H^\si f \|_{L^2(\omega)} \leq C \| f \|_{L^2(\sigma)}, \quad f \in L^2(\si),
\end{equation}
holds if and only if $H^\si$ and $H^\omega$ satisfy a global indicator testing condition if and only if $H^\si$ and $H^\omega$ satisfy a  local indicator testing condition and Muckenhoupt-Poisson two-weight $A_2$ condition holds. Moreover, the smallest constant $\mathscr{N}_2$ in \eqref{two weight in L^2} satisfies
$$
\mathscr{N}_2 \simeq \mathscr{H}^\si_2+ \mathscr{H}^\omega_{2}, 
$$
and there is also an equivalence with suitable local testing constants and a  two-weight $A_2$ constant; see the cited papers for details.
We remark that in $L^2$ the quadratic testing conditions are equivalent with the indicator testing conditions; hence we use the same notation for the constants.

\begin{con}\label{conj. for H}
Let $p \in (1,\infty)$. There exists a constant $C$ such that the Hilbert transform $H^\si$ satisfies the two-weight inequality
\begin{equation}\label{two_weight_Hilbert}
\| H^\si f \|_{L^p(\omega)} \leq C \| f \|_{L^p(\sigma)}, \quad f \in L^p(\si),
\end{equation}
if and only if $H^\si$ and $H^\omega$ satisfy the global quadratic testing conditions in $L^p$ and $L^{p'}$, respectively.  
Moreover, the smallest possible constant $\mathscr{N}_p$ in \eqref{two_weight_Hilbert} satisfies
$$
\mathscr{N}_p \simeq_p \mathscr{H}^\si_p+ \mathscr{H}^\omega_{p'}. 
$$
\end{con}

We also think that the two-weight inequality can be characterised in terms of certain local testing conditions together with a condition on the measures. 
Let $p \in (1, \infty)$. We say that $H^\sigma$ satisfies the \emph{local} quadratic testing condition if for some constant $C$
\begin{equation}\label{local quadratic for H}
\Big \| \Big( \sum_{i =1}^\infty \big (a_i 1_{I_i}H^\si 1_{I_i} \big)^2 \Big)^\frac{1}{2} \Big \|_{L^p(\omega)}
\leq C \Big \| \Big( \sum_{i=1}^\infty  (a_i 1_{I_i} )^2 \Big)^\frac{1}{2} \Big \|_{L^p(\si)}
\end{equation}
holds for all collections $\{I_i\}_{i=1}^\infty$ of intervals in $\R$ and all collections $\{a_i \}_{i=1}^\infty$ of real numbers.
The dual condition for $H^\omega$ is obtained from \eqref{local quadratic for H} by replacing $p$ by $p'$ and reversing the roles of the measures. We denote the smallest
possible constants by $\mathscr{H}^\si_{p, loc}$ and $\mathscr{H}^\omega_{p',loc}$.

If there exists a constant $C$ such that
\begin{equation}\label{eq:QMP1}
\Big \| \Big( \sum_{i=1}^\infty  \big( 1_{I_i} \int_{I_i^c} \frac{|f_i(x)| }{|x-c_i|} \ud \si (x) \big)^2 \Big)^{1/2} \Big \|_{L^p(\omega)} 
\le C \Big \| \Big( \sum_{i=1}^\infty |f_i|^2 \Big)^{1/2} \Big \|_{L^p(\si)}
\end{equation}
holds for all collections $\{I_i\}_{i=1}^\infty$ of intervals and all collections $\{f_i\}_{i=1}^\infty$ of functions, 
we say that the pair $(\si, \omega)$ of measures satisfies a (one-sided) quadratic Muckenhoupt-Poisson two-weight $A_p$ condition.
Here $I^c$ is the complement of the interval $I$.
We denote the smallest constant $C$ in \eqref{eq:QMP1} by $[\si,\omega]_p$. We will also need the dual condition 
of \eqref{eq:QMP1}, which is  obtained from \eqref{eq:QMP1} by replacing $p$ by $p'$ and reversing the roles of the measures. 
The best constant in the dual inequality is denoted by $[\omega, \si]_{p'}$.
We note that these conditions are positive in the sense that there is no cancellation involved.

We still formulate one condition, which we define to be satisfied if and only if there exists a constant $C$ such that
\begin{equation}\label{eq:AdjTest}
\begin{split}
\sum_{i=1}^\infty \big|& \langle a_i H^\si 1_{I_i}, b_i 1_{J(I_i)} \rangle_\omega \big| \\
&\le C \Big \| \Big( \sum_{i=1}^\infty |a_i1_{I_i}|^2 \Big)^{1/2} \Big \|_{L^p(\si)}
\Big \| \Big( \sum_{i=1}^\infty |b_i1_{J(I_i)}|^2 \Big)^{1/2} \Big \|_{L^{p'}(\omega)}
\end{split}
\end{equation}
holds for all collections $\{I_i\}_{i=1}^\infty$ of intervals and all collections $\{a_i\}_{i=1}^\infty$, $\{b_i\}_{i=1}^\infty$ of real numbers. 
Here $J(I_i)$ is an interval of equal length with $I_i$ and  adjacent to $I_i$. If \eqref{eq:AdjTest} is satisfied, 
let us denote the smallest constant $C$ in the inequality by $\scrT$.

In addition to Conjecture \ref{conj. for H} we suspect that the two-weight inequality \eqref{two_weight_Hilbert} holds if and only if the local quadratic testing conditions, 
the quadratic Muckenhoupt-Poisson two-weight $A_p$ condition and the condition \eqref{eq:AdjTest} hold, 
and that we have the estimate
\begin{equation}\label{eq:LocConj}
\mathscr{N}_p 
\simeq_p \mathscr{H}^\si_{p, loc} + \mathscr{H}^\omega_{p',loc} + [\si, \omega]_p+[\omega,\si]_{p'} +\scrT.
\end{equation}
At the moment it is unclear whether \eqref{eq:AdjTest} follows from \eqref{local quadratic for H}, \eqref{eq:QMP1} and their dual statements.
In any case, all the conditions are necessary for the two-weight inequality, and the estimate ``$\gtrsim_p$'' in \eqref{eq:LocConj} holds. 
The known $L^2$-result is analogous to \eqref{eq:LocConj}, except that in the $L^2$-case the condition \eqref{eq:AdjTest} is known to follow from the Muckenhoupt-Poisson $A_2$ condition.

\bibliography{weighted}
\bibliographystyle{abbrv}

\end{document}